\documentclass{amsart}

\usepackage{amsfonts,amsthm,hyperref,mathtools}

\newcommand{\F}{\mathbb{F}}
\DeclareMathOperator{\Det}{Det}
\DeclareMathOperator{\IM}{Im}
\DeclareMathOperator{\Tr}{Tr}

\DeclareMathOperator{\tr}{tr}
\DeclareMathOperator{\vc}{vec}

\theoremstyle{plain}
\newtheorem{theorem}{Theorem}[section]
\newtheorem{lemma}[theorem]{Lemma}
\newtheorem{corollary}[theorem]{Corollary}

\theoremstyle{definition}
\newtheorem{definition}[theorem]{Definition}

\theoremstyle{remark}

%%%%%%%%%%%%%%%%%%%%%%%%%%%%%%%%%%%%%%%%%%%%%%%%%%%%%%%%%%%%%%%%%%%%%%%%%%%%%%%%%%%%%%%%%%%%%%%%%%%%%%%%%%%%%%%%%%%%%%%%%%%%%%%%%%%%%%%%%%%%%%%%%%%%%%%%%%%%%%%%%%%%%%%%%%%%%%%%%%%%%%%%%%%%%%%%%%%%%%%%%%%%%%%%%%%%%%%%%%%%%%%%%%%%%%%%%%%%%%%%%%%%%%%%%%%%%%%%%%%%%%%%%%%%%%%%%%%%%%%%%%%%%%%%%%%%%%%%%%%%%%%%%%%%%%%%%%%%%%%%%%%%%%%%%%%%%%%%%%%%
\begin{document}
%%%%%%%%%%%%%%%%%%%%%%%%%%%%%%%%%%%%%%%%%%%%%%%%%%%%%%%%%%%%%%%%%%%%%%%%%%%%%%%%%%%%%%%%%%%%%%%%%%%%%%%%%%%%%%%%%%%%%%%%%%%%%%%%%%%%%%%%%%%%%%%%%%%%%%%%%%%%%%%%%%%%%%%%%%%%%%%%%%%%%%%%%%%%%%%%%%%%%%%%%%%%%%%%%%%%%%%%%%%%%%%%%%%%%%%%%%%%%%%%%%%%%%%%%%%%%%%%%%%%%%%%%%%%%%%%%%%%%%%%%%%%%%%%%%%%%%%%%%%%%%%%%%%%%%%%%%%%%%%%%%%%%%%%%%%%%%%%%%%%

\title{Preserving the trace of the Kronecker sum}

\author{Yorick Hardy}
\address[Y. Hardy]{School of Mathematics,\\
 University of the Witwatersrand,\\
 Johannesburg,\\
 Private Bag 3,\\
 Wits 2050,\\
 South Africa}
\email[Y. Hardy]{yorick.hardy@wits.ac.za}
\author{Ajda Fo\v sner}
\address[A. Fo\v sner]{Faculty of Management,\\
 University of Primorska,\\
 Cankarjeva 5,\\
 SI-6000 Koper,\\
 Slovenia}

\subjclass[2010]{%
 15A69; 15A86
}

\keywords{%
 Linear preserver; Kronecker product; Kronecker sum; trace; partial trace
}

\date{}

\begin{abstract}
 The aim of this paper is to study linear preservers of the trace of Kronecker sums $A\oplus B$
 and their connection with preservers of determinants of Kronecker products.
 The partial trace and partial determinant play a fundamental role in
 characterizing the preservers of the trace of Kronecker sums and preservers
 of the determinant of Kronecker products respectively.
\end{abstract}

\maketitle

%%%%%%%%%%%%%%%%%%%%%%%%%%%%%%%%%%%%%%%%%%%%%%%%%%%%%%%%%%%%%%%%%%%%%%%%%%%%%%%%%%%%%%%%%%%%%%%%%%%%%%%%%%%%%%%%%%%%%%%%%%%%%%%%%%%%%%%%%%%%%%%%%%%%%%%%%%%%%%%%%%%%%%%%%%%%%%%%%%%%%%%%%%%%%%%%%%%%%%%%%%%%%%%%%%%%%%%%%%%%%%%%%%%%%%%%%%%%%%%%%%%%%%%%%%%%%%%%%%%%%%%%%%%%%%%%%%%%%%%%%%%%%%%%%%%%%%%%%%%%%%%%%%%%%%%%%%%%%%%%%%%%%%%%%%%%%%%%%%%%
\section{Introduction}
%%%%%%%%%%%%%%%%%%%%%%%%%%%%%%%%%%%%%%%%%%%%%%%%%%%%%%%%%%%%%%%%%%%%%%%%%%%%%%%%%%%%%%%%%%%%%%%%%%%%%%%%%%%%%%%%%%%%%%%%%%%%%%%%%%%%%%%%%%%%%%%%%%%%%%%%%%%%%%%%%%%%%%%%%%%%%%%%%%%%%%%%%%%%%%%%%%%%%%%%%%%%%%%%%%%%%%%%%%%%%%%%%%%%%%%%%%%%%%%%%%%%%%%%%%%%%%%%%%%%%%%%%%%%%%%%%%%%%%%%%%%%%%%%%%%%%%%%%%%%%%%%%%%%%%%%%%%%%%%%%%%%%%%%%%%%%%%%%%%%

For positive integers $m,n>2$, let $M_n$ be the algebra of all $n\times n$ matrices over some field $\F$
and let $M_{mn} = M_m\otimes M_n = M_m(M_n)$. Here we consider $\mathbb{F}=\mathbb{R}$ or
$\mathbb{F}=\mathbb{C}$ and define $H_n\subset M_n$ to be the Hermitian matrices in $M_n$.
Linear maps preserving properties of Kronecker products of matrices have received considerable
attention in recent years.  Such maps are closely connected to quantum information science
(see, e.g., \cite{AF}). More recently, Ding et.\ al.\ considered linear preservers of determinants
of Kronecker products of Hermitian matrices \cite{ding17}, i.e., linear maps $\phi:H_{mn}\to H_{mn}$
satisfying
\begin{equation*}
 \det(\phi(A\otimes B)) = \det(A\otimes B)
\end{equation*}
where $A$ and $B$ are Hermitian. A few of the results in \cite{ding17} are restricted to the
case when $A$ and $B$ are positive or negative semidefinite matrices. In order to study this problem
more generally, we make use of the identity
\begin{equation*}
 \det\left(e^A\otimes e^B\right) = e^{\tr(A\oplus B)}
\end{equation*}
where $\oplus$ is the Kronecker sum, i.e.,
\begin{equation*}
 A\oplus B := A\otimes I_n + I_m\otimes B,
\end{equation*}
where $I_k$, $k=m,n$ denotes the $k\times k$ identity matrix.
Under exponentiation of Hermitian matrices, the Kronecker sum arises naturally
as the unique map $\oplus: H_m\times H_n\to H_m\otimes H_n$
satisfying
\begin{equation*}
 e^{A\oplus B} = e^A\otimes e^B, \qquad A\in H_m,\,B\in H_n.
\end{equation*}
Moreover, $A\in M_n$ over $\mathbb{C}$ is non-singular
if and only if $A=e^B$ for some $B\in M_n$ \cite[Example 6.2.15]{horn91}.
Thus, in studying linear maps $\psi:M_{mn}\to M_{mn}$ preserving determinants of (non-singular) Kronecker products
\begin{equation*}
 \det(\psi(A\otimes B)) = \det(A\otimes B)
\end{equation*}
of non-singular matrices $A$ and $B$, it suffices to study maps
$\phi:M_{mn}\to M_{mn}$ preserving the trace of Kronecker sums
\begin{equation}
 \label{eq:pr}%
 \tr(\phi(A\oplus B)) = \tr(A\oplus B).
\end{equation}
In this article we will confine our attention to linear maps $\phi$ satisfying
\eqref{eq:pr}.
We denote by $GL_n(\F)$ and $SL_n(\F)$ the general and special linear
groups in $M_n$ respectively. Thus, we are also interested in preservers of the determinant
of Kronecker product in $GL_{mn}(\F)$ in terms of linear preservers of the Kronecker sum.

In what follows, $m,n$ are positive integers. For a positive integer $k$, $I_k$ denotes the $k\times k$
identity matrix, $0_k$ the $k\times k$ zero matrix, and $E_{ij}^{(k)}$, $1\le i,j\le k$, the $k\times k$
matrix whose entries are all equal to zero except for the $(i,j)$-th entry which is equal to one.
As usual, the symbol $\delta _{ij}$ denotes the Kronecker delta, i.e.,
\begin{displaymath}
\delta_{ij} = 
\begin{cases}
1, & \textrm{if $i=j$}~;\\
0, & \textrm{if $i\ne j$}~.
\end{cases}
\end{displaymath}

The partial trace plays a central role in our investigation. Let $A\in M_{mn} = M_m(M_n) = M_m\otimes M_n$.
Then $A$ can be written as a block matrix $A = (A_{ij})_{ij}$ where $A_{ij}\in M_n$ and
$i,j=1,\ldots, m$. In terms of the Kronecker product we write
\begin{equation*}
 A = \sum_{i,j=1}^m E_{i,j}^{(m)}\otimes A_{ij}.
\end{equation*}
The second partial trace ($\tr_2$) maps each $n\times n$ block of $A$ to its trace, i.e.,
$\tr_2:(A_{ij})_{ij}\mapsto (\tr(A_{ij}))_{ij}$, or equivalently
\begin{equation*}
 \tr_2(A) = \sum_{i,j=1}^m E_{i,j}^{(m)}\otimes \tr(A_{ij}) = \sum_{i,j=1}^m \tr(A_{ij})E_{i,j}^{(m)}.
\end{equation*}
Like the trace, the partial trace is a linear operation. Furthermore, the partial trace
preserves the trace
\begin{equation*}
 \tr(A) = \tr(\tr_2(A)).
\end{equation*}
Similarly, we can define the first partial trace ($\tr_1$). This definition provides
\begin{equation*}
 \tr_1(A) = \sum_{j=1}^m A_{jj}.
\end{equation*}
Finally, we note that for any $B\in M_n$
\begin{equation*}
 \tr_1(A(I_m\otimes B))
  = \sum_{j=1}^m A_{jj}B = \tr_1(A)B
\end{equation*}
and similarly for any $C\in M_m$
\begin{equation*}
 \tr_2(A(C\otimes I_n))
  = \tr_2(A)C.
\end{equation*}
The transpose of the matrix $A\in M_n$ will be denoted by $A^T$.
First, we define RT-symmetry, which plays a similar role
in our analysis similar to that of symmetry in matrix analysis.

%%%%%%%%%%%%%%%%%%%%%%%%%%%%%%%%%%%%%%%%%%%%%%%%%%%%%%%%%%%%%%%%%%%%%%%%%%%%%%%%%%%%%%%%%%%%%%%%%%%%%%%%%%%%%%%%%%%%%%%%%%%%%%%%%%%%%%%%%%%%%%%%%%%%%%%%%%%%%%%%%%%%%%%%%%%%%%%%%%%%%%%%%%%%%%%%%%%%%%%%%%%%%%%%%%%%%%%%%%%%%%%%%%%%%%%%%%%%%%%%%%%%%%%%%%%%%%%%%%%%%%%%%%%%%%%%%%%%%%%%%%%%%%%%%%%%%%%%%%%%%%%%%%%%%%%%%%%%%%%%%%%%%%%%%%%%%%%%%%%%
\section{RT-Symmetry}
%%%%%%%%%%%%%%%%%%%%%%%%%%%%%%%%%%%%%%%%%%%%%%%%%%%%%%%%%%%%%%%%%%%%%%%%%%%%%%%%%%%%%%%%%%%%%%%%%%%%%%%%%%%%%%%%%%%%%%%%%%%%%%%%%%%%%%%%%%%%%%%%%%%%%%%%%%%%%%%%%%%%%%%%%%%%%%%%%%%%%%%%%%%%%%%%%%%%%%%%%%%%%%%%%%%%%%%%%%%%%%%%%%%%%%%%%%%%%%%%%%%%%%%%%%%%%%%%%%%%%%%%%%%%%%%%%%%%%%%%%%%%%%%%%%%%%%%%%%%%%%%%%%%%%%%%%%%%%%%%%%%%%%%%%%%%%%%%%%%%

Noting that $M_n$ is an $n^2$-dimensional space, in general we have
\begin{equation*}
 \phi(A) = \sum_{j,k,u,v=1}^n \alpha_{jk;uv}E_{jk}^{(n)}AE_{uv}^{(n)}
\end{equation*}
for some $\alpha_{jk;uv}$ in the underlying field. We define the linear
transform $\phi'$ of $\phi$ by
\begin{equation*}
 \phi'(A) := \sum_{j,k,u,v=1}^n \alpha_{uv;jk}E_{jk}^{(n)}AE_{uv}^{(n)}
          \equiv \sum_{j,k,u,v=1}^n \alpha_{jk;uv}E_{uv}^{(n)}AE_{jk}^{(n)}.
\end{equation*}
Clearly, $(\phi')'=\phi$.
Let $\Phi$ be the matrix representing the linear map $\phi$ in the standard basis,
and $\Phi'$ be the matrix representing $\phi'$. Since
\begin{equation*}
 \phi(E_{pq}^{(n)}) = \sum_{j,v=1}^n \alpha_{jp;qv}E_{jv}^{(n)}
\end{equation*}
it follows that
\begin{equation*}
 \Phi = \sum_{j,p,q,v=1}^n\alpha_{jp;qv}E_{jp}^{(n)}\otimes E_{vq}^{(n)}, \quad
 \Phi' = \sum_{q,k,u,p=1}^n\alpha_{qk;up}E_{up}^{(n)}\otimes E_{kq}^{(n)}.
\end{equation*}
Let $P$ denote the perfect shuffle (also known as the vec-permutation matrix) on $\mathbb{F}^n\otimes\mathbb{F}^n$
\cite{henderson81a,vanloan00a}, i.e. $P^T(A\otimes B)P=B\otimes A$. Then
\begin{equation*}
 \Phi^T = P^T\Phi' P.
\end{equation*}
Consequently, $\phi=\phi'$ if and only if $\Phi^T=P^T\Phi P$.
Equivalently, $\phi=\phi'$ if and only if $R(\Phi)^T = R(\Phi^T)$
where $R$ is the rearrangement operator \cite{vLP}.
The rearrangement operator $R$ is linear, and defined by
$R(A\otimes B)=(\vc A)(\vc B)^T$ where $\vc$ is the vec
operator \cite{henderson81a}.

\begin{definition}
 A linear map $\phi:M_n\to M_n$ satisfying $\phi=\phi'$ is said to be \emph{RT-symmetric}.
 If $\phi=-\phi'$ then $\phi$ is said to be \emph{skew RT-symmetric}.
\end{definition}

The following lemma follows immediately from $\phi=\frac12(\phi+\phi')+\frac12(\phi-\phi')$.

\begin{lemma}
 If the underlying field has characteristic not equal to 2, then
 every linear map $\phi:M_n\to M_n$ is the sum of an RT-symmetric map and
 a skew RT-symmetric map.
\end{lemma}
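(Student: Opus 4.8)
The plan is to carry out the standard symmetric/antisymmetric decomposition suggested by the displayed identity $\phi=\frac12(\phi+\phi')+\frac12(\phi-\phi')$. First I would set $\phi_s := \frac12(\phi+\phi')$ and $\phi_a := \frac12(\phi-\phi')$, which is legitimate precisely because $\chr\F\neq 2$ guarantees that $2$ is invertible and hence that $\frac12$ is a well-defined scalar. By construction $\phi = \phi_s + \phi_a$, so it remains only to verify that $\phi_s$ is RT-symmetric and that $\phi_a$ is skew RT-symmetric.

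The two facts I would rely on are both already available from the discussion preceding the definition: the passage $\phi\mapsto\phi'$ is linear, and it is an involution, $(\phi')'=\phi$. Linearity is clear from the coordinate description, since $\phi\mapsto\phi'$ merely interchanges the index pairs $jk\leftrightarrow uv$ in the coefficients $\alpha_{jk;uv}$, which is a linear operation on these coordinates; the involution property is stated outright. Granting these, I would compute $(\phi_s)' = \frac12(\phi'+(\phi')') = \frac12(\phi'+\phi)=\phi_s$ and $(\phi_a)' = \frac12(\phi'-(\phi')') = \frac12(\phi'-\phi)=-\phi_a$, where in each case the first equality uses linearity of the prime operation and the second uses $(\phi')'=\phi$. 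This establishes $\phi_s=(\phi_s)'$ and $\phi_a=-(\phi_a)'$, as required.

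There is essentially no genuine obstacle in this argument; it is the familiar splitting of an element into its invariant and anti-invariant parts under an order-two linear involution. The only points needing attention are bookkeeping: confirming that $\phi\mapsto\phi'$ really is linear, so that it distributes over the sums and scalar multiples defining $\phi_s$ and $\phi_a$, and confirming that the hypothesis $\chr\F\neq 2$ is exactly what is needed, both to form $\frac12$ and to ensure the two eigenspaces of the involution give a direct-sum splitting rather than degenerating. Since both are immediate from the preceding material, the lemma follows at once.
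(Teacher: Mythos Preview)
Your proposal is correct and follows exactly the approach the paper indicates: the paper simply notes that the lemma follows immediately from the identity $\phi=\tfrac12(\phi+\phi')+\tfrac12(\phi-\phi')$, and you have supplied precisely the routine verification that the two summands are RT-symmetric and skew RT-symmetric using linearity of $\phi\mapsto\phi'$ and the involution property $(\phi')'=\phi$.
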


\begin{definition}
 A linear map $\phi:M_n\to M_n$, over $\mathbb{C}$, satisfying $\phi=\overline{\phi'}$ is said to be \emph{RT-Hermitian}.
 If $\phi=-\overline{\phi'}$ then $\phi$ is said to be \emph{skew RT-Hermitian}.
\end{definition}

%%%%%%%%%%%%%%%%%%%%%%%%%%%%%%%%%%%%%%%%%%%%%%%%%%%%%%%%%%%%%%%%%%%%%%%%%%%%%%%%%%%%%%%%%%%%%%%%%%%%%%%%%%%%%%%%%%%%%%%%%%%%%%%%%%%%%%%%%%%%%%%%%%%%%%%%%%%%%%%%%%%%%%%%%%%%%%%%%%%%%%%%%%%%%%%%%%%%%%%%%%%%%%%%%%%%%%%%%%%%%%%%%%%%%%%%%%%%%%%%%%%%%%%%%%%%%%%%%%%%%%%%%%%%%%%%%%%%%%%%%%%%%%%%%%%%%%%%%%%%%%%%%%%%%%%%%%%%%%%%%%%%%%%%%%%%%%%%%%%%
\section{Linear trace preservers of Kronecker sums}
%%%%%%%%%%%%%%%%%%%%%%%%%%%%%%%%%%%%%%%%%%%%%%%%%%%%%%%%%%%%%%%%%%%%%%%%%%%%%%%%%%%%%%%%%%%%%%%%%%%%%%%%%%%%%%%%%%%%%%%%%%%%%%%%%%%%%%%%%%%%%%%%%%%%%%%%%%%%%%%%%%%%%%%%%%%%%%%%%%%%%%%%%%%%%%%%%%%%%%%%%%%%%%%%%%%%%%%%%%%%%%%%%%%%%%%%%%%%%%%%%%%%%%%%%%%%%%%%%%%%%%%%%%%%%%%%%%%%%%%%%%%%%%%%%%%%%%%%%%%%%%%%%%%%%%%%%%%%%%%%%%%%%%%%%%%%%%%%%%%%

We may write a linear map $\phi:M_{mn}\to M_{mn}$ in the operator-sum form
\begin{equation*}
 \phi(M) = \sum_{i=1}^r P_iMQ_i
\end{equation*}
for some matrices $P_i,Q_i$, $i=1,\ldots, r$, of the appropriate sizes.
If $\phi$ preserves the trace of a Kronecker sum $M$, then the
cyclic property of the trace yields
\begin{equation*}
 \tr(M) = \tr(\phi(M)) = \sum_{i=1}^r \tr(Q_iP_iM)
\end{equation*}
and so we need only consider preservers of the form
\begin{equation*}
 \phi(M) = \sum_{i=1}^r Q_iP_iM = PM,
\end{equation*}
where
\begin{equation}
 \label{eq:repr}%
 P := \sum_{i=1}^r Q_iP_i
\end{equation}
and the remaining preservers are all obtained by representations \eqref{eq:repr} of $P$.
First we consider maps of the form $\phi(M)=PM$, $M\in M_{mn}$.

\begin{theorem}
 \label{thm:mainpart}%
 Let $\phi:M_{mn}\to M_{mn}$ be a map given by $\phi:M\mapsto PM$ for
 some $P\in M_{mn}$. Then $\tr\phi(A\oplus B) = \tr(A\oplus B)$ for all $A\in M_m$ and $B\in M_n$,
 if and only if
 \begin{equation*}
  \tr_1(P)=\tr_1(I_{mn}) \quad\text{and}\quad \tr_2(P)=\tr_2(I_{mn}).
 \end{equation*}
\end{theorem}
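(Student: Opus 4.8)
The plan is to compute the left-hand side $\tr(P(A\oplus B))$ explicitly by exploiting the two partial-trace identities recorded in the introduction, and then to compare it term by term with $\tr(A\oplus B)$. First I would split the Kronecker sum and use linearity of the trace:
\[
 \tr(\phi(A\oplus B)) = \tr(P(A\otimes I_n)) + \tr(P(I_m\otimes B)).
\]
For the first summand, since the total trace is recovered from the second partial trace ($\tr = \tr\circ\tr_2$) and since $\tr_2(X(C\otimes I_n)) = \tr_2(X)\,C$, I obtain
\[
 \tr(P(A\otimes I_n)) = \tr(\tr_2(P(A\otimes I_n))) = \tr(\tr_2(P)\,A).
\]
Analogously, using the evident identity $\tr = \tr\circ\tr_1$ together with $\tr_1(X(I_m\otimes B)) = \tr_1(X)\,B$, the second summand reduces to $\tr(\tr_1(P)\,B)$. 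Hence $\tr(\phi(A\oplus B)) = \tr(\tr_2(P)\,A) + \tr(\tr_1(P)\,B)$.

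Next I would apply exactly the same reduction to the target value, writing $\tr(A\oplus B) = \tr(I_{mn}(A\oplus B))$ and replacing $P$ by $I_{mn}$ throughout, which yields $\tr(\tr_2(I_{mn})\,A) + \tr(\tr_1(I_{mn})\,B)$. Subtracting, the preservation requirement \eqref{eq:pr} becomes equivalent to
\[
 \tr\bigl((\tr_2(P)-\tr_2(I_{mn}))\,A\bigr) + \tr\bigl((\tr_1(P)-\tr_1(I_{mn}))\,B\bigr) = 0
\]
for every $A\in M_m$ and $B\in M_n$.

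The ``if'' direction is then immediate, since both difference terms vanish under the hypothesis. For ``only if'', I would specialise the free arguments: setting $B=0_n$ leaves $\tr((\tr_2(P)-\tr_2(I_{mn}))A)=0$ for all $A\in M_m$, while setting $A=0_m$ leaves $\tr((\tr_1(P)-\tr_1(I_{mn}))B)=0$ for all $B\in M_n$. Because the bilinear form $(X,Y)\mapsto \tr(XY)$ on $M_k$ is non-degenerate, each of the two differences must be the zero matrix, which is precisely the pair of claimed identities.

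The computation is essentially routine once the partial-trace machinery is in place; there is no serious obstacle. The only point requiring genuine care — and indeed the real content of the statement — is the reduction of a single full trace taken against the Kronecker-sum argument $A\oplus B$ into two \emph{independent} traces, one pairing $A$ with $\tr_2(P)$ and one pairing $B$ with $\tr_1(P)$. After that separation, the non-degeneracy of the trace form decouples the $A$- and $B$-conditions and delivers the two equalities simultaneously.
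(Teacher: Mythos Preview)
Your argument is correct and follows essentially the same route as the paper: both proofs split $\tr(P(A\oplus B))$ via the partial-trace identities into $\tr(\tr_2(P)A)+\tr(\tr_1(P)B)$ and then read off the two conditions separately. The only cosmetic difference is that for the ``only if'' direction the paper evaluates on the basis matrices $E_{ij}^{(m)}\otimes I_n$ and $I_m\otimes E_{kl}^{(n)}$ explicitly, whereas you set $B=0_n$ (resp.\ $A=0_m$) and invoke non-degeneracy of the trace form---these are of course the same argument in different clothing.
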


\begin{proof}
 First, let us write $P$ in block matrix form, $P=(P_{kl})$ where
 each $P_{kl}\in M_n$ for $k,l=1,\ldots,m$. In other words,
 \begin{equation*}
  P = \sum_{k,l=1}^m E_{kl}^{(m)}\otimes P_{kl}.
 \end{equation*}
 Since $\phi$ is linear, the map $\phi$ preserves the trace
 of Kronecker sums if and only if $\phi$ preserves traces of
 Kronecker products of the form $E_{ij}^{(m)}\otimes I_n$ and
 of the form $I_m\otimes E_{kl}^{(n)}$. Thus, we have
 \begin{equation*}
  n\delta_{ij} = \tr(E_{ij}^{(m)}\otimes I_n)
               = \tr(\phi(E_{ij}^{(m)}\otimes I_n)) 
 \end{equation*}
 and
 \begin{equation*}
  \tr(\phi(E_{ij}^{(m)}\otimes I_n)) 
               = \sum_{kl=1}^m\delta_{jk}\delta_{il}\tr(P_{kl}) 
               = \tr(P_{ij}).
 \end{equation*}
 It follows that
 \begin{equation*}
  \tr_2(P) = \sum_{k,l=1}^m \tr(P_{kl})E_{kl}^{(m)} = nI_m.
 \end{equation*}
 For Kronecker products of the form $I_m\otimes E_{kl}^{(n)}$, we find
 \begin{equation*}
  m\delta_{kl} = \tr(I_m\otimes E_{kl}^{(n)})
               = \tr(\phi(I_m\otimes E_{kl}^{(n)})),
 \end{equation*}
 where
 \begin{equation*}
  \tr(\phi(I_m\otimes E_{kl}^{(n)}))
               = \sum_{i,j=1}^m \delta_{ij}\tr(P_{ij}E_{kl}^{(n)})
               = \sum_{j=1}^m(P_{jj})_{lk}.
 \end{equation*}
 Consequently,
 \begin{equation*}
  \tr_1(P) = \sum_{i,j=1}^m P_{jj} = mI_n.
 \end{equation*}
 Conversely, suppose that $\tr_1(P) = mI_n$ and $\tr_2(P) = nI_m$. Then
 \begin{align*}
  \tr(\phi(A\oplus B))
   &= \tr(\tr_2(P(A\otimes I_n))) + \tr(\tr_1(P(I_m\otimes B))) \\
   &= \tr(\tr_2(P)A) + \tr(\tr_1(P)B) = n\tr(A) + m\tr(B) = \tr(A\oplus B).
   \qedhere
 \end{align*}
\end{proof}

\begin{corollary}
 Let $\phi:M_{mn}\to M_{mn}$ be a map given by $\phi:M\mapsto PM$ for
 some $P\in M_{mn}$, where
 \begin{equation*}
  P = I_{mn} + \sum_{j=1}^r A_j\otimes B_j.
 \end{equation*}
 Here, $r$ is the tensor rank of $P-I_{mn}$ over $M_m\otimes M_n$
 and $A_j\in M_m$, $B_j\in M_n$ for $j=1,\ldots,r$.
 Then $\tr\phi(A\oplus B) = \tr(A\oplus B)$, if and only if
 $\tr(A_j)=\tr(B_j)=0$ for $j=1,\ldots,r$.
\end{corollary}

\begin{proof}
 By theorem \ref{thm:mainpart} we need only show that $\tr_1(P)=mI_n$ and $\tr_2(P)=nI_m$
 if and only if $\tr(A_j)=\tr(B_j)=0$ for $j=1,\ldots,r$. The proof of $(\Leftarrow)$
 is immediate. For $(\Rightarrow)$, suppose $\tr_1(P)=mI_n$ and $\tr_2(P)=nI_m$.
 It follows that
 \begin{equation*}
  \sum_{j=1}^r \tr(A_j)B_j = 0_n, \qquad
  \sum_{j=1}^r \tr(B_j)A_j = 0_m.
 \end{equation*}
 Since $r$ is the tensor rank of $P-I_{mn}$, the set $\{\,B_1,\ldots,\, B_r\}$ is a linearly
 independent set and $\tr(A_j)=0$ for $j=1,\ldots,r$. Similarly,
 $\tr(B_j)=0$ for $j=1,\ldots,r$.
\end{proof}

As a consequence of Theorem \ref{thm:mainpart}, we have that $\phi:M\mapsto PM$ satisfies
$\tr\phi(A\oplus B) = \tr(A\oplus B)$ if and only if $\tr_1(\phi(I_{mn})) = \tr_1(I_{mn})$ and
$\tr_2(\phi(I_{mn})) = \tr_2(I_{mn})$. In general, this statement is true modulo a traceless
matrix. We note that any linear map $\phi:M_{mn}\to M_{mn}$ can be written in the form
\begin{equation*}
 \phi(M) = M + \sum_{j=1}^r (A_j\otimes C_j)M(B_j\otimes D_j),
\end{equation*}
where $A_j,B_j\in M_m$ and $C_j,D_j\in M_n$ for $j=1,\ldots,r$.
In the following we will use the commutation operation $[A,B]=AB-BA$ corresponding to the Lie product of matrices $A$ and $B$ of the appropriate sizes.

\begin{lemma}
 \label{lem:ident}%
 Let $\phi:M_{mn}\to M_{mn}$ be a linear map given by
 \begin{equation*}
  \phi(M) = M + \sum_{j=1}^r (A_j\otimes C_j)M(B_j\otimes D_j).
 \end{equation*}
 Then $\tr\phi(A\oplus B) = \tr(A\oplus B)$ for all $A\in M_m$ and $B\in M_n$,
 if and only if
 \begin{equation*}
  \tr_1(\phi(I_{mn})-I_{mn})
   = \sum_{j=1}^r \tr(A_jB_j) [C_j,D_j]
 \end{equation*}
 and
 \begin{equation*}
  \tr_2(\phi(I_{mn})-I_{mn})
   = \sum_{j=1}^r \tr(C_jD_j) [A_j,B_j].
 \end{equation*}
\end{lemma}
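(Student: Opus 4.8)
The plan is to reduce the whole statement to Theorem \ref{thm:mainpart} by means of the cyclic property of the trace, exactly as in the discussion preceding that theorem. First I would read off the operator-sum form of $\phi$: it consists of the summand $I_{mn}\,M\,I_{mn}$ together with the summands $(A_j\otimes C_j)M(B_j\otimes D_j)$. Invoking cyclic invariance of the trace I would obtain $\tr\phi(M)=\tr(PM)$ for every $M\in M_{mn}$, where, by the mixed-product property of the Kronecker product,
\begin{equation*}
 P = I_{mn} + \sum_{j=1}^r (B_j\otimes D_j)(A_j\otimes C_j) = I_{mn} + \sum_{j=1}^r (B_jA_j)\otimes(D_jC_j).
\end{equation*}
Theorem \ref{thm:mainpart} then says that $\tr\phi(A\oplus B)=\tr(A\oplus B)$ for all $A,B$ if and only if $\tr_1(P)=mI_n$ and $\tr_2(P)=nI_m$, so it remains only to translate these two matrix equations into the asserted commutator identities.

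Next I would compute the two partial traces of $P$ explicitly, using the elementary identities $\tr_1(C\otimes D)=\tr(C)\,D$ and $\tr_2(C\otimes D)=\tr(D)\,C$ for $C\in M_m$ and $D\in M_n$, which follow at once from the block-matrix definitions in the introduction. Together with $\tr(B_jA_j)=\tr(A_jB_j)$ and $\tr(D_jC_j)=\tr(C_jD_j)$ this gives
\begin{equation*}
 \tr_1(P) = mI_n + \sum_{j=1}^r \tr(A_jB_j)\,D_jC_j, \qquad \tr_2(P) = nI_m + \sum_{j=1}^r \tr(C_jD_j)\,B_jA_j,
\end{equation*}
so that the conditions of Theorem \ref{thm:mainpart} are equivalent to $\sum_j \tr(A_jB_j)D_jC_j=0_n$ and $\sum_j \tr(C_jD_j)B_jA_j=0_m$.

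In parallel I would evaluate $\phi$ at the identity, $\phi(I_{mn})=I_{mn}+\sum_j (A_jB_j)\otimes(C_jD_j)$, and apply the same two partial-trace identities to $\phi(I_{mn})-I_{mn}$, obtaining
\begin{equation*}
 \tr_1(\phi(I_{mn})-I_{mn}) = \sum_{j=1}^r \tr(A_jB_j)\,C_jD_j, \qquad \tr_2(\phi(I_{mn})-I_{mn}) = \sum_{j=1}^r \tr(C_jD_j)\,A_jB_j.
\end{equation*}
The statement then follows by comparison with the commutator sums: since $\sum_j\tr(A_jB_j)[C_j,D_j]$ equals $\sum_j\tr(A_jB_j)C_jD_j-\sum_j\tr(A_jB_j)D_jC_j$, the first asserted identity is equivalent to the vanishing of $\sum_j\tr(A_jB_j)D_jC_j$, which is precisely $\tr_1(P)=mI_n$; the $\tr_2$ case is identical with the roles of the two tensor factors interchanged.

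The computations are routine once the set-up is fixed, so the main obstacle I anticipate is purely bookkeeping: keeping the four families $A_j,B_j,C_j,D_j$ in their correct positions through the mixed-product step $(B_j\otimes D_j)(A_j\otimes C_j)=(B_jA_j)\otimes(D_jC_j)$ and through the two partial traces. It is exactly the resulting discrepancy between $D_jC_j$ and $C_jD_j$ (respectively $B_jA_j$ and $A_jB_j$) that produces the commutators, so an ordering slip would destroy the identity. As a consistency check I would note that each commutator is traceless, which matches the remark before the lemma that the clean $\phi\mapsto PM$ characterization holds only modulo a traceless matrix.
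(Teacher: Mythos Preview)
Your argument is correct and follows essentially the same path as the paper's proof: both reduce to the pair of vanishing conditions $\sum_j \tr(A_jB_j)D_jC_j = 0_n$ and $\sum_j \tr(C_jD_j)B_jA_j = 0_m$ and then recognise each as the difference between $\tr_k(\phi(I_{mn})-I_{mn})$ and the corresponding commutator sum. The only organisational difference is that you obtain these vanishing conditions by packaging the cyclic-trace step as $\tr\phi(M)=\tr(PM)$ and invoking Theorem~\ref{thm:mainpart}, whereas the paper re-derives them directly from $\tr\phi(A\otimes I_n)=n\tr(A)$ and $\tr\phi(I_m\otimes B)=m\tr(B)$; your route is slightly more economical.
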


\begin{proof}
 The linear map $\phi:M_{mn}\to M_{mn}$ can be written in the form
 \begin{equation*}
  \phi(M) = M + \sum_{j=1}^r (A_j\otimes C_j)M(B_j\otimes D_j)
 \end{equation*}
 where $A_j,B_j\in M_m$ and $C_j,D_j\in M_n$ for $j=1,\ldots,r$.
 Since $\tr\phi(A\oplus B) = \tr(A\oplus B)$ if and only if $\tr\phi(A\otimes I_n) = \tr(A\otimes I_n)$
 and $\tr\phi(I_m\otimes B) = \tr(I_m\otimes B)$ for all $A\in M_m$ and $B\in M_n$, we consider these
 two cases separately. In the first case we have
 \begin{equation*}
  \tr\phi(A\otimes I_n) = n\tr(A) + \tr\left(\left(\sum_{j=1}^r \tr(C_jD_j) B_jA_j\right)A\right)
                        = n\tr(A)
 \end{equation*}
 for all $A\in M_m$. This equation holds if and only if
 \begin{align*}
  0_m &= \sum_{j=1}^r \tr(C_jD_j) B_jA_j = \sum_{j=1}^r \tr(C_jD_j) ([B_j,A_j] + A_jB_j) \\
      &= -Q + \sum_{j=1}^r \tr(C_jD_j)A_jB_j \\
      &= -Q + \tr_2\phi(I_{mn}) - \tr_2 I_{mn}
 \end{align*}
 where $[A,B]:=AB-BA$ is the commutator and
 \begin{equation*}
  Q := \sum_{j=1}^r \tr(C_jD_j) [A_j,B_j] = \tr_2\phi(I_{mn}) - \tr_2 I_{mn}
 \end{equation*}
 is traceless (i.e., $\tr(Q)=0$).
 Similarly, the second case yields that $\tr\phi(I_m\otimes B) = \tr(I_m\otimes B)$
 if and only if
 \begin{equation*}
  \sum_{j=1}^r \tr(A_jB_j) [C_j,D_j] = \tr_1\phi(I_{mn}) - \tr_1 I_{mn}.
  \qedhere
 \end{equation*}
\end{proof}

The commutators in this lemma highlight the traceless character. However, the
anti-commutator plays a similar role. Here, the anti-commutator of matrices $A$ and $B$ is given by $[A,B]_+=AB+BA$.
We state the following lemma without proof, which is almost identical to the previous.

\begin{lemma}
 \label{lem:ac}%
 Let $\phi:M_{mn}\to M_{mn}$ be a linear map given by
 \begin{equation*}
  \phi(M) = M + \sum_{j=1}^r (A_j\otimes C_j)M(B_j\otimes D_j).
 \end{equation*}
 Then $\tr\phi(A\oplus B) = \tr(A\oplus B)$ for all $A\in M_m$ and $B\in M_n$,
 if and only if
 \begin{equation*}
  \tr_1(\phi(I_{mn})-I_{mn})
   = \sum_{j=1}^r \tr(A_jB_j) [C_j,D_j]_+
 \end{equation*}
 and
 \begin{equation*}
  \tr_2(\phi(I_{mn})-I_{mn})
   = \sum_{j=1}^r \tr(C_jD_j) [A_j,B_j]_+.
 \end{equation*}
\end{lemma}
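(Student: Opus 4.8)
The plan is to follow the proof of Lemma~\ref{lem:ident} essentially verbatim, changing only the sign that distinguishes the commutator from the anti-commutator. As before, I would first reduce to the two families $A\otimes I_n$ and $I_m\otimes B$: since $A\oplus B = A\otimes I_n + I_m\otimes B$ and both $\phi$ and $\tr$ are linear, the condition $\tr\phi(A\oplus B)=\tr(A\oplus B)$ for all $A\in M_m$, $B\in M_n$ holds if and only if $\tr\phi(A\otimes I_n)=n\tr(A)$ for all $A$ and $\tr\phi(I_m\otimes B)=m\tr(B)$ for all $B$. The same computation as in Lemma~\ref{lem:ident} gives $\tr\phi(A\otimes I_n) = n\tr(A) + \tr\bigl(\bigl(\sum_j \tr(C_jD_j)B_jA_j\bigr)A\bigr)$, so the first family is preserved exactly when $\sum_j \tr(C_jD_j)B_jA_j = 0_m$, and likewise the second family is preserved exactly when $\sum_j \tr(A_jB_j)D_jC_j = 0_n$. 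These two vanishing \emph{reversed-product} sums are precisely the content of the trace-preserving hypothesis.

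Next I would record the two identities that hold unconditionally. Since $\phi(I_{mn})-I_{mn} = \sum_j (A_jB_j)\otimes(C_jD_j)$, applying $\tr_2(X\otimes Y)=\tr(Y)X$ and $\tr_1(X\otimes Y)=\tr(X)Y$ yields
\[
 \tr_2(\phi(I_{mn})-I_{mn}) = \sum_{j=1}^r \tr(C_jD_j)A_jB_j, \qquad \tr_1(\phi(I_{mn})-I_{mn}) = \sum_{j=1}^r \tr(A_jB_j)C_jD_j.
\]
These are the same identities used in the previous proof and do not depend on whether $\phi$ preserves the trace.

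Finally I would combine the vanishing conditions with these identities. The only departure from Lemma~\ref{lem:ident} is that, where that proof \emph{subtracts} the reversed sum to produce $A_jB_j - B_jA_j = [A_j,B_j]$, here I \emph{add} it to produce $A_jB_j + B_jA_j = [A_j,B_j]_+$. Explicitly, assuming $\sum_j \tr(C_jD_j)B_jA_j = 0_m$,
\[
 \sum_{j=1}^r \tr(C_jD_j)[A_j,B_j]_+ = \sum_{j=1}^r \tr(C_jD_j)A_jB_j = \tr_2(\phi(I_{mn})-I_{mn}),
\]
and conversely, equating the anti-commutator expression with the unconditional identity for $\tr_2$ forces $\sum_j \tr(C_jD_j)B_jA_j = 0_m$; the $\tr_1$ statement is identical under the substitution $A_j\leftrightarrow C_j$, $B_j\leftrightarrow D_j$. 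I expect no genuine obstacle, since the proof works precisely \emph{because} the reversed-product sums vanish: that is exactly what permits one to add or subtract them freely, so the commutator and anti-commutator forms are interchangeable under one and the same hypothesis. The one point worth stating carefully is that both directions of the equivalence route through these unconditional identities rather than through any new computation.
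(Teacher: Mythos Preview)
Your proposal is correct and is exactly what the paper intends: the paper states Lemma~\ref{lem:ac} without proof, remarking only that it is ``almost identical to the previous,'' and your argument carries out precisely that modification of the proof of Lemma~\ref{lem:ident}, replacing the subtraction $B_jA_j = A_jB_j - [A_j,B_j]$ by the addition $B_jA_j = [A_j,B_j]_+ - A_jB_j$ (equivalently, adding rather than subtracting the vanishing reversed-product sum). There is nothing to add.
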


Lemma \ref{lem:ident} shows that the partial traces of the identity matrix
must be preserved modulo a traceless matrix. However, this traceless matrix
is not arbitrary but precisely defined in terms of $\phi$. The following
theorem shows that $\phi'$ plays a fundamental role in the characterization
of $\phi$, and provides an succint characterization for RT-symmetric and
skew RT-symmetric maps in the subsequent two corollaries.

\begin{theorem}
 \label{thm:ident}%
 Let $\phi:M_{mn}\to M_{mn}$ be a linear map.
 Then $\tr\phi(A\oplus B) = \tr(A\oplus B)$
 for all $A\in M_m$ and $B\in M_n$ if and only if
 \begin{equation*}
  \tr_1\phi'(I_{mn}) = \tr_1(I_{mn})
  \quad\text{and}\quad
  \tr_2\phi'(I_{mn}) = \tr_2(I_{mn}).
 \end{equation*}
\end{theorem}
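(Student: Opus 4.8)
The plan is to deduce the theorem directly from Lemma~\ref{lem:ident}. First I would fix the tensor operator-sum representation $\phi(M) = M + \sum_{j=1}^r (A_j\otimes C_j)M(B_j\otimes D_j)$ already used in that lemma, and identify how the prime operation acts on it. The key observation is that $\phi'$ merely interchanges the left and right multipliers in any operator-sum representation: writing $\phi(M)=\sum_i P_iMQ_i$ and expanding $P_i,Q_i$ in the basis $\{E_{jk}^{(mn)}\}$, the definition of $\phi'$ relabels the coefficients so that $\phi'(M)=\sum_i Q_iMP_i$. Applied to our representation this gives $\phi'(M) = M + \sum_{j=1}^r (B_j\otimes D_j)M(A_j\otimes C_j)$, and hence $\phi'(I_{mn}) = I_{mn} + \sum_{j=1}^r (B_jA_j)\otimes(D_jC_j)$.

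Next I would compute the two partial traces of $\phi'(I_{mn})$ using the identities $\tr_1(X\otimes Y)=\tr(X)\,Y$ and $\tr_2(X\otimes Y)=\tr(Y)\,X$ together with cyclicity of the trace, obtaining
\begin{align*}
 \tr_1\phi'(I_{mn}) &= \tr_1(I_{mn}) + \sum_{j=1}^r \tr(A_jB_j)\,D_jC_j, \\
 \tr_2\phi'(I_{mn}) &= \tr_2(I_{mn}) + \sum_{j=1}^r \tr(C_jD_j)\,B_jA_j.
\end{align*}
The same identities give $\tr_1\phi(I_{mn}) = \tr_1(I_{mn}) + \sum_j \tr(A_jB_j)\,C_jD_j$ and $\tr_2\phi(I_{mn}) = \tr_2(I_{mn}) + \sum_j \tr(C_jD_j)\,A_jB_j$.

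The crucial algebraic step is then to subtract these pairs and recognise the commutators of Lemma~\ref{lem:ident}. Indeed
\begin{equation*}
 \tr_1\phi(I_{mn}) - \tr_1\phi'(I_{mn}) = \sum_{j=1}^r \tr(A_jB_j)\,[C_j,D_j],
\end{equation*}
which is exactly the right-hand side of the first condition in Lemma~\ref{lem:ident}. That condition, $\tr_1(\phi(I_{mn})-I_{mn}) = \sum_j \tr(A_jB_j)[C_j,D_j]$, therefore collapses to $\tr_1\phi'(I_{mn}) = \tr_1(I_{mn})$; the $\tr_2$ case is handled identically. I expect the only real obstacle to be bookkeeping rather than ideas: one must verify the swap formula for $\phi'$ in the tensor representation and keep the cyclic reorderings $\tr(B_jA_j)=\tr(A_jB_j)$ and $\tr(D_jC_j)=\tr(C_jD_j)$ straight so that the commutators line up precisely. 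Once that is done, the equivalence with Lemma~\ref{lem:ident} is immediate, and since that lemma already characterises the trace-preservation property, the theorem follows.
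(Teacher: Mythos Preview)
Your proposal is correct and follows essentially the same route as the paper's proof: both fix the tensor operator-sum representation, compute $\tr_k\phi(I_{mn})$ and $\tr_k\phi'(I_{mn})$, subtract to recover the commutator sums, and then invoke Lemma~\ref{lem:ident}. The only difference is that you spell out why $\phi'$ swaps the left and right multipliers in an operator-sum representation, which the paper uses without comment.
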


\begin{proof}
 Using the representation of $\phi$ from Lemma \ref{lem:ident} provides
 \begin{align*}
  \tr_1(\phi(I_{mn})) &= \tr_1 I_{mn} + \sum_{j=1}^r \tr(A_jB_j) C_jD_j, \\
  \tr_1(\phi'(I_{mn})) &= \tr_1 I_{mn} + \sum_{j=1}^r \tr(B_jA_j) D_jC_j 
 \end{align*}
 and subtracting these two equations yields
 \begin{equation*}
  \tr_1((\phi-\phi')(I_{mn})) = \sum_{j=1}^r \tr(A_jB_j)[C_j,D_j].
 \end{equation*}
 Similarly,
 \begin{equation*}
  \tr_2((\phi-\phi')(I_{mn})) = \sum_{j=1}^r \tr(C_jD_j)[A_j,B_j].
 \end{equation*}
 From Lemma \ref{lem:ident},
 $\tr\phi(A\oplus B) = \tr(A\oplus B)$ for all $A\in M_m$ and $B\in M_n$
 if and only if
 \begin{equation*}
  \tr_1(\phi(I_{mn})-I_{mn}) = \tr_1((\phi-\phi')(I_{mn}))
  \quad\text{and}\quad
  \tr_2(\phi(I_{mn})-I_{mn}) = \tr_2((\phi-\phi')(I_{mn}))
 \end{equation*}
 if and only if
 \begin{equation*}
  \tr_1(\phi'(I_{mn})) = \tr_1(I_{mn})
  \quad\text{and}\quad
  \tr_2(\phi'(I_{mn})) = \tr_2(I_{mn}).
  \qedhere
 \end{equation*}
\end{proof}

\begin{corollary}
 \label{cor:ident}%
 Let $\phi:M_{mn}\to M_{mn}$ be an RT-symmetric map.
 Then $\tr\phi(A\oplus B) = \tr(A\oplus B)$
 for all $A\in M_m$ and $B\in M_n$ if and only if
 \begin{equation*}
  \tr_1\phi(I_{mn}) = \tr_1(I_{mn})
  \quad\text{and}\quad
  \tr_2\phi(I_{mn}) = \tr_2(I_{mn}).
 \end{equation*}
\end{corollary}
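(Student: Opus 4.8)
The plan is to deduce this directly from Theorem~\ref{thm:ident}. That theorem already characterizes the trace-preservation property \eqref{eq:pr} for an \emph{arbitrary} linear map through the partial traces of $\phi'(I_{mn})$ rather than of $\phi(I_{mn})$; the only remaining task is to exploit the hypothesis that $\phi$ is RT-symmetric, which is precisely the statement $\phi=\phi'$.

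First I would recall that RT-symmetry gives $\phi'(I_{mn})=\phi(I_{mn})$. Consequently the two conditions furnished by Theorem~\ref{thm:ident}, namely $\tr_1\phi'(I_{mn})=\tr_1(I_{mn})$ and $\tr_2\phi'(I_{mn})=\tr_2(I_{mn})$, transform verbatim into $\tr_1\phi(I_{mn})=\tr_1(I_{mn})$ and $\tr_2\phi(I_{mn})=\tr_2(I_{mn})$. Since Theorem~\ref{thm:ident} asserts the equivalence of \eqref{eq:pr} with the primed conditions, substituting $\phi$ for $\phi'$ under the RT-symmetry hypothesis yields the claimed equivalence with no further work.

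Alternatively, I could argue from Lemma~\ref{lem:ident} directly. Writing $\phi(M)=M+\sum_{j=1}^r (A_j\otimes C_j)M(B_j\otimes D_j)$, the identity $\tr_1((\phi-\phi')(I_{mn}))=\sum_{j=1}^r \tr(A_jB_j)[C_j,D_j]$ established inside the proof of Theorem~\ref{thm:ident} shows that the commutator correction term appearing in Lemma~\ref{lem:ident} is exactly $\tr_1((\phi-\phi')(I_{mn}))$, which vanishes when $\phi=\phi'$; the analogous statement holds for $\tr_2$ with $\sum_{j=1}^r\tr(C_jD_j)[A_j,B_j]$. Hence the traceless corrections collapse and the conditions of Lemma~\ref{lem:ident} reduce to preservation of the bare partial traces $\tr_1\phi(I_{mn})$ and $\tr_2\phi(I_{mn})$.

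There is essentially no technical obstacle here: the mathematical content lies entirely in Theorem~\ref{thm:ident}, and RT-symmetry is exactly the hypothesis that removes the prime. The only point requiring a moment of care is to confirm that the prime operation and the partial traces are being taken on $M_{mn}$ (and not on the two tensor factors separately), so that the equality $\phi=\phi'$ may be applied as stated; this is immediate from the definitions of $\phi'$ and of $\tr_1,\tr_2$ on $M_{mn}=M_m\otimes M_n$.
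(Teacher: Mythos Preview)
Your proposal is correct and matches the paper's approach exactly: the corollary is stated immediately after Theorem~\ref{thm:ident} with no separate proof, so it is understood to follow by substituting $\phi'=\phi$ into that theorem's characterization, which is precisely your first argument. The alternative route via Lemma~\ref{lem:ident} is also fine but is more than the paper requires.
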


\begin{corollary}
 \label{cor:skident}%
 Let $\phi:M_{mn}\to M_{mn}$ be a skew RT-symmetric map.
 Then $\tr\phi(A\oplus B) = \tr(A\oplus B)$
 for all $A\in M_m$ and $B\in M_n$ if and only if
 \begin{equation*}
  \tr_1\phi(I_{mn}) = -\tr_1(I_{mn})
  \quad\text{and}\quad
  \tr_2\phi(I_{mn}) = -\tr_2(I_{mn}).
 \end{equation*}
\end{corollary}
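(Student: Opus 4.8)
The plan is to derive this directly from Theorem~\ref{thm:ident} together with the defining relation of skew RT-symmetry, mirroring the argument that yields Corollary~\ref{cor:ident} in the RT-symmetric case. First I would recall that, by Theorem~\ref{thm:ident}, the preservation condition $\tr\phi(A\oplus B)=\tr(A\oplus B)$ for all $A\in M_m$, $B\in M_n$ is equivalent to the pair of identities $\tr_1\phi'(I_{mn})=\tr_1(I_{mn})$ and $\tr_2\phi'(I_{mn})=\tr_2(I_{mn})$. Thus it suffices to rewrite these two conditions in terms of $\phi$ itself.

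Next, I would invoke the hypothesis that $\phi$ is skew RT-symmetric, i.e.\ $\phi=-\phi'$, which is equivalent to $\phi'=-\phi$ and hence gives $\phi'(I_{mn})=-\phi(I_{mn})$. Substituting this into the conditions supplied by Theorem~\ref{thm:ident} and using the linearity of the partial traces $\tr_1$ and $\tr_2$ (so that each commutes with multiplication by the scalar $-1$), the two conditions become $-\tr_1\phi(I_{mn})=\tr_1(I_{mn})$ and $-\tr_2\phi(I_{mn})=\tr_2(I_{mn})$. Rearranging each equation produces exactly the claimed identities $\tr_1\phi(I_{mn})=-\tr_1(I_{mn})$ and $\tr_2\phi(I_{mn})=-\tr_2(I_{mn})$. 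Since every step is an equivalence, the converse direction follows at the same time.

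Because the argument is a one-line substitution into an already-established equivalence, I do not anticipate any genuine obstacle; the only point requiring (routine) care is that $\phi'$ here denotes the associated transform of a map on the full algebra $M_{mn}$, so that the relation $\phi'=-\phi$ is applied to $I_{mn}$ rather than to any factored identity. The sole structural contrast with Corollary~\ref{cor:ident} is the sign: the RT-symmetric case uses $\phi'=\phi$ and the minus signs drop out, whereas the skew case uses $\phi'=-\phi$ and introduces them.
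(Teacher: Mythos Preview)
Your proposal is correct and follows exactly the paper's intended route: both Corollary~\ref{cor:ident} and Corollary~\ref{cor:skident} are stated without separate proofs because they are immediate specializations of Theorem~\ref{thm:ident} under $\phi'=\phi$ and $\phi'=-\phi$, respectively. Your substitution $\phi'(I_{mn})=-\phi(I_{mn})$ into the partial-trace conditions of Theorem~\ref{thm:ident}, together with linearity of $\tr_1$ and $\tr_2$, is precisely what the paper has in mind.
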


It is straightforward to extend Corollaries \ref{cor:ident} and \ref{cor:skident}
to the RT-Hermitian and skew RT-Hermitian cases since
$\tr_1(\phi'(I_{mn})) = \tr_1(I_{mn})$
if and only if $\tr_1(\overline{\phi'(I_{mn})}) = \tr_1(I_{mn})$.

Now we are ready to consider the connection with the work in \cite{ding17}.
The connection is provided by the exponential map, i.e.,
\begin{equation*}
 \det(e^A\otimes e^B) = e^{\tr(A\oplus B)}.
\end{equation*}

%%%%%%%%%%%%%%%%%%%%%%%%%%%%%%%%%%%%%%%%%%%%%%%%%%%%%%%%%%%%%%%%%%%%%%%%%%%%%%%%%%%%%%%%%%%%%%%%%%%%%%%%%%%%%%%%%%%%%%%%%%%%%%%%%%%%%%%%%%%%%%%%%%%%%%%%%%%%%%%%%%%%%%%%%%%%%%%%%%%%%%%%%%%%%%%%%%%%%%%%%%%%%%%%%%%%%%%%%%%%%%%%%%%%%%%%%%%%%%%%%%%%%%%%%%%%%%%%%%%%%%%%%%%%%%%%%%%%%%%%%%%%%%%%%%%%%%%%%%%%%%%%%%%%%%%%%%%%%%%%%%%%%%%%%%%%%%%%%%%%
\section{Determinant preservers of Kronecker products}
%%%%%%%%%%%%%%%%%%%%%%%%%%%%%%%%%%%%%%%%%%%%%%%%%%%%%%%%%%%%%%%%%%%%%%%%%%%%%%%%%%%%%%%%%%%%%%%%%%%%%%%%%%%%%%%%%%%%%%%%%%%%%%%%%%%%%%%%%%%%%%%%%%%%%%%%%%%%%%%%%%%%%%%%%%%%%%%%%%%%%%%%%%%%%%%%%%%%%%%%%%%%%%%%%%%%%%%%%%%%%%%%%%%%%%%%%%%%%%%%%%%%%%%%%%%%%%%%%%%%%%%%%%%%%%%%%%%%%%%%%%%%%%%%%%%%%%%%%%%%%%%%%%%%%%%%%%%%%%%%%%%%%%%%%%%%%%%%%%%%

The condition given in Lemma \ref{lem:ident} implies that we may characterize
a class of determinant preservers of Kronecker products in terms of partial determinants.
However, the relationship between the partial trace and the partial determinant is not
straightforward. If we restrict our attention to matrices over the complex numbers,
$M_{mn}(\mathbb{C})$, then we have \cite{hardy17}
\begin{equation*}
 \Det(e^A\otimes e^B) = e^{\Tr(A\oplus B)} R_{mn}
\end{equation*}

where $\Det(A) := \sqrt[n]{\det(A)}R_n$ and $\Tr(A) := \tr(A)/n$ for $A\in M_n$,
and $R_n$ is the multiplicative group of $n$-th roots of unity in $\mathbb{C}$.
Furthermore, \cite{hardy17} showed that
\begin{equation*}
 \Det_1(e^A\otimes e^B) = e^{\Tr_1(A\oplus B)} R_{m}
 \quad\text{and}\quad
 \Det_2(e^A\otimes e^B) = e^{\Tr_2(A\oplus B)} R_{n}.
\end{equation*}

Let $\Omega_{mn}\subset M_{mn}(\mathbb{C})$
denote the set of matrices in $M_{mn}(\mathbb{C})$ with each eigenvalue
$\lambda$ satisfying $\IM(\lambda)\in(-\pi,\pi]$. Thus we associate with
every non-singular matrix $A$ a unique matrix $M\in\Omega_{mn}$ such
that $A=e^M$.
Let $\phi:M_{mn}(\mathbb{C})\to M_{mn}(\mathbb{C})$ be a linear map and let
$\psi:GL_{mn}(\mathbb{C})\to GL_{mn}(\mathbb{C})$ be the non-linear map
\begin{equation*}
 \psi(e^M) = e^{\phi(M)}.
\end{equation*}
The map is well defined since $M\in\Omega_{mn}$ is uniquely determined for
every matrix in $GL_{mn}(\mathbb{C})$. We have
\begin{equation*}
 \det\psi(e^M) = \det e^{\phi(M)}
               = e^{\tr\phi(M)}
\end{equation*}
so that $\det\psi(e^M)=\det(e^M)$ if and only if $e^{\tr\phi(M)}=e^{\tr M}$.
By linearity of the trace and $\phi$, this holds if and only if $\tr\phi(M)=\tr M$.
Clearly, linear preservers of the trace of Kronecker sums also preserve
the $\Tr$ of Kronecker sums.
Thus, Lemma \ref{lem:ident} provides the following corollary. We use the
same form for $\phi$ as in Lemma \ref{lem:ident}.

\begin{corollary}
 Let $\phi:\Omega_{mn}(\mathbb{C})\to M_{mn}(\mathbb{C})$ be a linear map.
 The map $\psi:GL_{mn}(\mathbb{C})\to GL_{mn}(\mathbb{C})$ given by
 \begin{equation*}
  \psi(e^M) = e^{\phi(M)}
 \end{equation*}
 satisfies $\Det(\psi(A\otimes B)) = \Det(A\otimes B)$ if and only if
 $\Det_1(\psi(I)) = e^{I_n}U R_m$ for $U\in SL_{n}(\mathbb{C})$ and 
 $\Det_2(\psi(I)) = e^{I_m}V R_n$ for $V\in SL_{m}(\mathbb{C})$, where
 \begin{equation*}
   U = \exp\left(\frac1m\sum_{j=1}^r \tr(A_jB_j) [C_j,D_j]\right),\qquad
   V = \exp\left(\frac1n\sum_{j=1}^r \tr(C_jD_j) [A_j,B_j]\right).
 \end{equation*}
\end{corollary}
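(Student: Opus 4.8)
The plan is to reduce the statement to Lemma~\ref{lem:ident} and then carry its two partial-trace identities across the exponential map. The discussion preceding the corollary shows that, for a linear $\phi$, $\det\psi(e^M)=\det(e^M)$ holds (via the scaling $M\mapsto tM$) if and only if $\tr\phi(M)=\tr M$; moreover the coset $\Det(X)=\sqrt[mn]{\det X}\,R_{mn}$ determines $\det X$, so the hypothesis $\Det(\psi(A\otimes B))=\Det(A\otimes B)$ for all $A,B$ is equivalent to $\det\psi(A\otimes B)=\det(A\otimes B)$, hence to $\tr\phi(A\oplus B)=\tr(A\oplus B)$. By Lemma~\ref{lem:ident} this is in turn equivalent to
\[
 \tr_1(\phi(I_{mn})-I_{mn})=\sum_{j=1}^r\tr(A_jB_j)[C_j,D_j],\qquad
 \tr_2(\phi(I_{mn})-I_{mn})=\sum_{j=1}^r\tr(C_jD_j)[A_j,B_j].
\]
The correct evaluation point is $e^{I_{mn}}$ (so $\psi(I)$ abbreviates $\psi(e^{I_{mn}})=e^{\phi(I_{mn})}$), because the data of Lemma~\ref{lem:ident} sit inside $\phi(I_{mn})$: a direct computation gives $\phi(I_{mn})-I_{mn}=\sum_j(A_jB_j)\otimes(C_jD_j)$, so that $\tr_1(\phi(I_{mn})-I_{mn})=\sum_j\tr(A_jB_j)\,C_jD_j$ and $\tr_2(\phi(I_{mn})-I_{mn})=\sum_j\tr(C_jD_j)\,A_jB_j$.

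Next I would apply the relations $\Det_1(e^X)=e^{\Tr_1(X)}R_m$ and $\Det_2(e^X)=e^{\Tr_2(X)}R_n$ from \cite{hardy17} to $X=\phi(I_{mn})$. With $\Tr_1=\tr_1/m$, $\Tr_2=\tr_2/n$ and $\tr_1(I_{mn})=mI_n$, $\tr_2(I_{mn})=nI_m$, this yields
\[
 \Det_1(\psi(e^{I_{mn}}))=e^{I_n}U_0R_m,\qquad \Det_2(\psi(e^{I_{mn}}))=e^{I_m}V_0R_n,
\]
where $U_0=\exp(\tfrac1m\sum_j\tr(A_jB_j)C_jD_j)$ and $V_0=\exp(\tfrac1n\sum_j\tr(C_jD_j)A_jB_j)$. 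Comparing exponents with the claimed $U,V$, the two identities of Lemma~\ref{lem:ident} say precisely that $\sum_j\tr(A_jB_j)C_jD_j=\sum_j\tr(A_jB_j)[C_j,D_j]$ and $\sum_j\tr(C_jD_j)A_jB_j=\sum_j\tr(C_jD_j)[A_j,B_j]$; substituting these replaces $U_0,V_0$ by $U,V$, giving $\Det_1(\psi(e^{I_{mn}}))=e^{I_n}UR_m$ and $\Det_2(\psi(e^{I_{mn}}))=e^{I_m}VR_n$. Finally $U\in SL_n$ and $V\in SL_m$ follow at once, since $\det U=\exp(\tfrac1m\sum_j\tr(A_jB_j)\tr[C_j,D_j])=1$ because commutators are traceless, and likewise $\det V=1$. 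This settles the forward implication.

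The delicate point is the converse, and it is where I expect the main difficulty. Because $\exp$ is not injective and $\Det_1,\Det_2$ are defined only up to the cosets $R_m,R_n$, whereas the identities of Lemma~\ref{lem:ident} are exact matrix equalities, there is a mismatch to reconcile. Read at the level of principal representatives---equating $e^{I_n}U_0$ with $e^{I_n}U$ and $e^{I_m}V_0$ with $e^{I_m}V$---the equivalence is immediate, since $U_0=U$ and $V_0=V$ are exactly the two identities above. If instead the cosets are equated only as sets, one recovers $U_0=\zeta_1U$ and $V_0=\zeta_2V$ with $\zeta_1\in R_m$, $\zeta_2\in R_n$, and the remaining task is to show these residual roots of unity are trivial; here I would combine $\det U=\det V=1$ with $\det U_0=e^{s/m}$ and $\det V_0=e^{s/n}$, where $s=\sum_j\tr(A_jB_j)\tr(C_jD_j)$, to constrain $\zeta_1,\zeta_2$ across the two factors and conclude.
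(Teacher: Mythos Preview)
Your proposal is correct and follows exactly the route the paper indicates: the corollary is stated without proof, merely as a consequence of Lemma~\ref{lem:ident} together with the exponential identities $\Det_k(e^X)=e^{\Tr_k(X)}R_{\,\cdot}$ from \cite{hardy17}, and your argument carries this out in full. Your discussion of the converse direction is in fact more careful than the paper itself, which does not address the coset ambiguity you raise; the paper implicitly reads the equalities at the level of the canonical representatives $e^{\Tr_k\phi(I_{mn})}$, under which your identification $U_0=U$, $V_0=V$ is immediate.
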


The matrices $U\in SL_{n}(\mathbb{C})$ and $V\in SL_{m}(\mathbb{C})$ are not arbitrary.
Theorem \ref{thm:ident} and Corollaries \ref{cor:ident} and \ref{cor:skident} provide
a stronger condition, which we present as our final
theorem.% \add{We note that in \cite{ding17}, the determinant preserving linear maps
%which preserve positive definiteness are characterized. In our result, positive definite
%preserving $\psi$ corresponds to Hermitian preserving $\phi$, i.e. $\phi$ is RT-Hermitian.}

\begin{theorem}
 \label{thm:detthm}%
 Let $\phi:\Omega_{mn}(\mathbb{C})\to M_{mn}(\mathbb{C})$ be
 an RT-symmetric or RT-Hermitian map.
 The map $\psi:GL_{mn}(\mathbb{C})\to GL_{mn}(\mathbb{C})$
 given by
 \begin{equation*}
  \psi(e^M) = e^{\phi(M)}
 \end{equation*}
 satisfies $\Det(\psi(A\otimes B)) = \Det(A\otimes B)$ if and only if
 \begin{equation*}
  \Det_1(\psi(e^{I_{mn}})) = e^{I_n} R_m \quad\text{and}\quad
  \Det_2(\psi(e^{I_{mn}})) = e^{I_m} R_n.
 \end{equation*}
\end{theorem}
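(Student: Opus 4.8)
The plan is to reduce the multiplicative $\Det$-condition to the additive trace condition already characterised in Theorem \ref{thm:ident}, and then to transport the resulting partial-trace normalisations across the exponential map to the partial determinants. First I would note that, since $\Det(X)$ is by definition the full set of $mn$-th roots of $\det(X)$, the coset equality $\Det(\psi(A\otimes B))=\Det(A\otimes B)$ holds exactly when $\det\psi(A\otimes B)=\det(A\otimes B)$. Writing $A=e^{A'}$, $B=e^{B'}$, so that $A\otimes B=e^{A'\oplus B'}$ and $\psi(A\otimes B)=e^{\phi(A'\oplus B')}$, the identity $\det(e^{N})=e^{\tr N}$ converts this into $e^{\tr\phi(A'\oplus B')}=e^{\tr(A'\oplus B')}$. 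The quantity $\tr\phi(A'\oplus B')-\tr(A'\oplus B')$ is linear in $(A',B')$ and vanishes at $(A',B')=(0,0)$, yet is here constrained to the discrete set $2\pi i\mathbb{Z}$; hence it vanishes identically, and $\Det(\psi(A\otimes B))=\Det(A\otimes B)$ for all $A,B$ if and only if $\tr\phi(A\oplus B)=\tr(A\oplus B)$ for all $A\in M_m$ and $B\in M_n$.

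Next I would feed this trace condition into the RT-symmetry machinery. For an RT-symmetric map, Corollary \ref{cor:ident} shows that preservation of the trace of every Kronecker sum is equivalent to the two exact normalisations $\tr_1\phi(I_{mn})=\tr_1(I_{mn})=mI_n$ and $\tr_2\phi(I_{mn})=\tr_2(I_{mn})=nI_m$. For an RT-Hermitian map the same two equalities follow from Theorem \ref{thm:ident} together with the remark after Corollary \ref{cor:skident}: since $\tr_1(I_{mn})$ and $\tr_2(I_{mn})$ are real and $\phi(I_{mn})=\overline{\phi'(I_{mn})}$, the conditions on $\phi'(I_{mn})$ and on $\phi(I_{mn})$ coincide. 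Thus in both cases the $\Det$-condition is equivalent to $\tr_1\phi(I_{mn})=mI_n$ and $\tr_2\phi(I_{mn})=nI_m$.

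It remains to match these partial-trace normalisations against the partial-determinant equalities for $\psi(e^{I_{mn}})=e^{\phi(I_{mn})}$. Using the identities $\Det_1(e^{N})=e^{\Tr_1(N)}R_m$ and $\Det_2(e^{N})=e^{\Tr_2(N)}R_n$ of \cite{hardy17}, with $\Tr_1(N)=\tfrac1m\tr_1(N)$ and $\Tr_2(N)=\tfrac1n\tr_2(N)$ and $N=\phi(I_{mn})$, I obtain $\Det_1(\psi(e^{I_{mn}}))=e^{\frac1m\tr_1\phi(I_{mn})}R_m$ and $\Det_2(\psi(e^{I_{mn}}))=e^{\frac1n\tr_2\phi(I_{mn})}R_n$. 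The forward implication is then immediate: $\tr_1\phi(I_{mn})=mI_n$ gives $\tfrac1m\tr_1\phi(I_{mn})=I_n$ and hence $e^{I_n}R_m$, and symmetrically $\tr_2\phi(I_{mn})=nI_m$ yields $e^{I_m}R_n$.

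The main obstacle is the reverse implication, which must invert the exponential at the single matrix $\phi(I_{mn})$. Two points need care. First, $\phi(I_{mn})$ need not be a Kronecker sum, so $e^{\phi(I_{mn})}$ need not be a Kronecker product, and I must use the identities $\Det_i(e^N)=e^{\Tr_i N}R$ in the generality of arbitrary $N$ rather than only for $e^{A}\otimes e^{B}$. Second, $\Det_2(\psi(e^{I_{mn}}))=e^{I_m}R_n$ yields only $e^{\frac1n\tr_2\phi(I_{mn})}=\omega\,e\,I_m$ for some $\omega\in R_n$, i.e.\ that $\tfrac1n\tr_2\phi(I_{mn})$ exponentiates to a scalar matrix, which because $\exp$ is not injective does not by itself force $\tr_2\phi(I_{mn})=nI_m$. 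I expect to close this gap exactly as the root-of-unity cosets are designed to be read: by taking the representatives of $\Det_1$ and $\Det_2$ in the fundamental domain $\Omega$, so that the eigenvalue restriction $\IM(\lambda)\in(-\pi,\pi]$ together with the scalar value $e\,I_m$ pins down $\tfrac1n\tr_2\phi(I_{mn})=I_m$, and likewise for $\Det_1$. Establishing this bijective exponential correspondence between the normalised partial traces and the partial determinants on $\Omega$ is the one delicate step; once it is in place, combining it with the reduction of the first paragraph and Corollary \ref{cor:ident} gives the stated equivalence.
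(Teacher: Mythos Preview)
Your strategy coincides with the paper's: the theorem is stated there without a written proof, as following directly from Corollary~\ref{cor:ident} (with the RT-Hermitian extension noted after Corollary~\ref{cor:skident}) combined with the $\Det_i$--$\Tr_i$ identities of \cite{hardy17}. Your reduction of the $\Det$ condition to the trace condition and your forward implication reproduce this faithfully.

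The difficulty you flag in the reverse direction is genuine, and the paper does not address it either. Your proposed resolution via the fundamental domain $\Omega$ does not work as stated: $\phi$ is only assumed to map $\Omega_{mn}$ into $M_{mn}$, with no eigenvalue restriction on the image, so nothing forces $\phi(I_{mn})$ or $\tfrac{1}{n}\tr_2\phi(I_{mn})$ to lie in a region where the matrix exponential is injective. Concretely, the coset equality $e^{\frac{1}{n}\tr_2\phi(I_{mn})}R_n=e^{I_m}R_n$ only says that $e^{\frac{1}{n}\tr_2\phi(I_{mn})}$ is a scalar matrix of the form $e\omega I_m$ with $\omega^n=1$; this is compatible, for instance, with $\tr_2\phi(I_{mn})=nI_m+2\pi i n\,\mathrm{diag}(1,0,\ldots,0)$, which is not $nI_m$ and hence does not feed back into Corollary~\ref{cor:ident}. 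So the $(\Leftarrow)$ direction, as you suspected, has a real gap that your proposal correctly isolates but does not close, and that the paper's implicit argument shares.
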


\begin{theorem}
 \label{thm:detthm2}%
 Let $\phi:\Omega_{mn}(\mathbb{C})\to M_{mn}(\mathbb{C})$ be a skew RT-symmetric 
 or skew RT-Hermitian map.
 The map $\psi:GL_{mn}(\mathbb{C})\to GL_{mn}(\mathbb{C})$
 given by
 \begin{equation*}
  \psi(e^M) = e^{\phi(M)}
 \end{equation*}
 satisfies $\Det(\psi(A\otimes B)) = \Det(A\otimes B)$ if and only if
 \begin{equation*}
  \Det_1(\psi(e^{I_{mn}})) = e^{-I_n} R_m \quad\text{and}\quad
  \Det_2(\psi(e^{I_{mn}})) = e^{-I_m} R_n.
 \end{equation*}
\end{theorem}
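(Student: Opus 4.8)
The plan is to reuse the exponential dictionary that links determinants of Kronecker products to traces of Kronecker sums, but to route the argument through Corollary~\ref{cor:skident} rather than Corollary~\ref{cor:ident}, so that each trace condition appears with a reversed sign. First I would reduce the determinant statement to a trace statement exactly as in the corollary preceding Theorem~\ref{thm:detthm}: since $\Det(X)=\Det(Y)$ holds precisely when $\det(X)=\det(Y)$, and $\det(e^M)=e^{\tr M}$, writing $A\otimes B=e^{A'\oplus B'}$ with $A=e^{A'}$ and $B=e^{B'}$ gives $\psi(A\otimes B)=e^{\phi(A'\oplus B')}$ and reduces $\Det(\psi(A\otimes B))=\Det(A\otimes B)$, for all invertible $A,B$, to $\tr\phi(A\oplus B)=\tr(A\oplus B)$ for all $A\in M_m$ and $B\in M_n$; the scaling argument applied to $\lambda M$ absorbs the $2\pi i$ periodicity of the exponential and promotes the equality of exponentials to equality of traces.

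Next I would feed this trace condition into the skew hypothesis. If $\phi=-\phi'$ is skew RT-symmetric, Corollary~\ref{cor:skident} states that $\tr\phi(A\oplus B)=\tr(A\oplus B)$ for all $A,B$ is equivalent to $\tr_1\phi(I_{mn})=-\tr_1(I_{mn})=-mI_n$ and $\tr_2\phi(I_{mn})=-\tr_2(I_{mn})=-nI_m$. If instead $\phi=-\overline{\phi'}$ is skew RT-Hermitian, I would combine Theorem~\ref{thm:ident} with the conjugation remark following Corollary~\ref{cor:skident}: since $\overline{\phi'(I_{mn})}=-\phi(I_{mn})$ and $\tr_k(I_{mn})$ is real, the condition $\tr_k\phi'(I_{mn})=\tr_k(I_{mn})$ is again equivalent to $\tr_k\phi(I_{mn})=-\tr_k(I_{mn})$ for $k=1,2$. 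Thus both hypotheses collapse to the single pair of identities $\tr_1\phi(I_{mn})=-mI_n$ and $\tr_2\phi(I_{mn})=-nI_m$.

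Finally I would convert these partial-trace identities into the asserted partial-determinant identities using the partial analogues of the exponential formula. From $\psi(e^{I_{mn}})=e^{\phi(I_{mn})}$, the relation $\Det_1(e^X)=e^{\Tr_1(X)}R_m$ with $\Tr_1=\tr_1/m$ turns $\tr_1\phi(I_{mn})=-mI_n$ into $\Tr_1(\phi(I_{mn}))=-I_n$ and hence into $\Det_1(\psi(e^{I_{mn}}))=e^{-I_n}R_m$; the symmetric computation with $\Det_2(e^X)=e^{\Tr_2(X)}R_n$ and $\Tr_2=\tr_2/n$ yields $\Det_2(\psi(e^{I_{mn}}))=e^{-I_m}R_n$, which are exactly the claimed conditions.

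The delicate point is the backward direction of this last translation. Reading $\Det_1(\psi(e^{I_{mn}}))=e^{-I_n}R_m$ in reverse only gives $e^{\Tr_1(\phi(I_{mn}))}=\zeta e^{-I_n}$ for some $\zeta\in R_m$, so to recover $\tr_1\phi(I_{mn})=-mI_n$ on the nose I must exploit that $\phi$ is defined on $\Omega_{mn}$, where the eigenvalue constraint $\IM(\lambda)\in(-\pi,\pi]$ renders the logarithm single-valued and pins $\Tr_1(\phi(I_{mn}))$ down uniquely, so that the root-of-unity freedom carried by $R_m$ (and by $R_n$ in the $\Det_2$ factor) is precisely the freedom already recorded on the right-hand side and the equivalence is genuinely two-sided. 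Verifying that this ambiguity matches on both sides is the main obstacle; the remaining manipulations are the sign-reversed mirror of the proof of Theorem~\ref{thm:detthm}.
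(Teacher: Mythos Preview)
Your proposal is correct and follows exactly the route the paper indicates: the paper gives no separate proof of Theorem~\ref{thm:detthm2} but simply presents it as the skew counterpart of Theorem~\ref{thm:detthm}, obtained from Corollary~\ref{cor:skident} (and its Hermitian extension) through the exponential dictionary $\Det_k(e^X)=e^{\Tr_k(X)}R_k$. Your identification of the root-of-unity ambiguity in the backward direction is apt, though note that your resolution appeals to the eigenvalue constraint on the \emph{domain} $\Omega_{mn}$ while what is needed is control over $\phi(I_{mn})$ in the codomain; the paper leaves this point implicit as well.
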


%%%%%%%%%%%%%%%%%%%%%%%%%%%%%%%%%%%%%%%%%%%%%%%%%%%%%%%%%%%%%%%%%%%%%%%%%%%%%%%%%%%%%%%%%%%%%%%%%%%%%%%%%%%%%%%%%%%%%%%%%%%%%%%%%%%%%%%%%%%%%%%%%%%%%%%%%%%%%%%%%%%%%%%%%%%%%%%%%%%%%%%%%%%%%%%%%%%%%%%%%%%%%%%%%%%%%%%%%%%%%%%%%%%%%%%%%%%%%%%%%%%%%%%%%%%%%%%%%%%%%%%%%%%%%%%%%%%%%%%%%%%%%%%%%%%%%%%%%%%%%%%%%%%%%%%%%%%%%%%%%%%%%%%%%%%%%%%%%%%%

\section*{Funding}

The first author is supported by the National Research Foundation (NRF), South Africa.
This work is based on the research supported in part by the National Research
Foundation of South Africa (Grant Numbers: 105968). Any opinions, findings and
conclusions or recommendations expressed is that of the author(s), and the NRF
accepts no liability whatsoever in this regard.

%%%%%%%%%%%%%%%%%%%%%%%%%%%%%%%%%%%%%%%%%%%%%%%%%%%%%%%%%%%%%%%%%%%%%%%%%%%%%%%%%%%%%%%%%%%%%%%%%%%%%%%%%%%%%%%%%%%%%%%%%%%%%%%%%%%%%%%%%%%%%%%%%%%%%%%%%%%%%%%%%%%%%%%%%%%%%%%%%%%%%%%%%%%%%%%%%%%%%%%%%%%%%%%%%%%%%%%%%%%%%%%%%%%%%%%%%%%%%%%%%%%%%%%%%%%%%%%%%%%%%%%%%%%%%%%%%%%%%%%%%%%%%%%%%%%%%%%%%%%%%%%%%%%%%%%%%%%%%%%%%%%%%%%%%%%%%%%%%%%%

\bibliographystyle{tfnlm}
\bibliography{ksumlinpreserve}

%%%%%%%%%%%%%%%%%%%%%%%%%%%%%%%%%%%%%%%%%%%%%%%%%%%%%%%%%%%%%%%%%%%%%%%%%%%%%%%%%%%%%%%%%%%%%%%%%%%%%%%%%%%%%%%%%%%%%%%%%%%%%%%%%%%%%%%%%%%%%%%%%%%%%%%%%%%%%%%%%%%%%%%%%%%%%%%%%%%%%%%%%%%%%%%%%%%%%%%%%%%%%%%%%%%%%%%%%%%%%%%%%%%%%%%%%%%%%%%%%%%%%%%%%%%%%%%%%%%%%%%%%%%%%%%%%%%%%%%%%%%%%%%%%%%%%%%%%%%%%%%%%%%%%%%%%%%%%%%%%%%%%%%%%%%%%%%%%%%%

\end{document}